\newcommand{\zz}{\mathcal{Z}}
\newcommand{\lae}{\varepsilon}
\newcommand{\cp}{\mathcal P}
\newcommand{\bind}{\textrm{Ind}}
\newcommand{\bres}{\textrm{Res}}
\newcommand{\bdef}{\textrm{Def}}
\newcommand{\binf}{\textrm{Inf}}
\theoremstyle{plain}
\newtheorem{teo}{Teorema}
\newtheorem{prop}[teo]{Proposition}
\newtheorem{coro}[teo]{Corollary}
\newtheorem{lema}[teo]{Lemma}
\theoremstyle{definition}
\newtheorem{defi}[teo]{Definition}
\newtheorem{nota}[teo]{Notation}
\theoremstyle{remark}
\newtheorem{ejem}[teo]{Example}
\newtheorem{rem}[teo]{Remark}
\author{Nadia Romero\footnote{Supported by CONACYT.}\\
\begin{small}
Mathematics Department, Bilkent University
\end{small}\\
\begin{small}
Ankara, Turkey
\end{small}
}
\title{On fibred biset functors with fibres of order prime and four 
}
\date{ }
\begin{document}

\catcode`\"=12\relax

\maketitle

\begin{abstract}
This note has two purposes: First, to present a counterexample to a conjecture parametrizing the simple modules over Green biset functors, appearing in an author's previous article. This parametrization fails for the monomial Burnside ring over a cyclic group of order four. Second, to classify the simple modules for the monomial Burnside ring over a group of prime order, for which the above-mentioned parametrization holds. 
\end{abstract}

\section*{Introduction}

This note presents a counterexample to a conjecture appearing in \cite{mine}, parametrizing the simple modules over a Green biset functor. The conjecture generalized the classification of simple biset functors, as well as the classification of simple modules over Green functors appearing in Bouc \cite{bGfun}. 
It relied on the assumption that for a simple module over a Green biset functor its minimal groups should be isomorphic, which we will see is not generally true. 

For a better understanding of this note, the reader is invited to take a look at \cite{mine}, where he can acquaint himself with the context of modules over Green biset functors. 

Given a Green biset functor $A$, defined in a class of groups $\zz$ closed under subquotients and direct products, and over a commutative ring with identity $R$, one can define the category $\cp_A$. The objects of $\cp_A$ are the groups in $\zz$, and given two groups $G$ and $H$ in $\zz$, the set $Hom_{\cp_A}(G,\, H)$  is $A(H\times G)$. Composition in $\cp_A$ is given through the product $\times$ of the definition of a Green biset functor, that is, given $\alpha$ in $A(G\times H)$ and $\beta$ in $A(H\times K)$, the product $\alpha\circ \beta$ is defined as
\begin{displaymath}
A\big(\bdef_{G\times K}^{G\times \Delta(H)\times K}\circ \bres_{G\times \Delta(H)\times K}^{G\times H\times H\times K}\big)(\alpha\times \beta).
\end{displaymath}
The identity element in $A(G\times G)$ is $A(\bind_{\Delta(G)}^{G\times G}\circ \binf^{\Delta(G)}_1)(\lae_A)$, where $\lae_A\in A(1)$ is the identity element of the definition of a Green biset functor.
Even if this product may seem a bit strange, in many cases the category $\cp_A$ is already known and has been studied. For example, if $A$ is the Burnside ring functor, $\cp_A$ is the biset category defined in $\zz$.  It is proved in \cite{mine} that for any Green biset functor $A$, the category of $A$-modules is equivalent to the category of $R$-linear functors from $\cp_A$ to $R$-Mod, and it is through this equivalence that they are studied.

In Section 2 of \cite{mine}, we defined $I_A(G)$ for a group $G$ in $\zz$ as the submodule of $A(G\times G)$ generated by elements which can be factored through $\circ$ by groups in $\zz$ of order smaller than $|G|$.
We denote by $\hat{A}(G)$ the quotient $A(G\times G)/I_A(G)$. Conjecture 2.16 in \cite{mine} stated that the isomorphism classes of simple $A$-modules were in one-to-one correspondence with the equivalence classes of couples $(H,\, V)$ where $H$ is a group in $\zz$ such that $\hat{A}(H)\neq 0$ and $V$ is a simple $\hat{A}(H)$-module. Two couples $(H,\, V)$ and $(G,\, W)$ are related if $H$ and $G$ are isomorphic and $V$ and $W$ are isomorphic as $\hat{A}(H)$-modules (the $\hat{A}(H)$-action on $W$ is defined in Section 4 of \cite{mine}). The correspondence assigned to the class of a simple $A$-module $S$, the class of the couple $(H,\, V)$ where $H$ is a minimal group for $S$ and $V=S(H)$. We will see in Section 2 that for the monomial Burnside ring over a cyclic group of order four and with coefficients in a field, we can find a simple module which has two non-isomorphic minimal groups. 

For a finite abelian group $C$ and a finite group $G$, the monomial Burnside ring of $G$ with coefficients in $C$ is a particular case of the ring of monomial representations introduced by Dress \cite{dmon}. Fibred biset functors were defined by Boltje and Co\c{s}kun as functors from the  category in which the morphisms from a group $G$ to a group $H$ is the monomial Burnside ring of $H\times G$, they called these morphisms fibred bisets. This category is precisely $\cp_A$ when $A$ is the monomial Burnside ring functor, and so fibred biset functors coincide with $A$-modules for this functor.  
Boltje and Co\c{s}kun also considered the case in which $C$ may be an infinite abelian group, but we shall not consider this case. 
 Unfortunately, there is no published material on the subject, I thank Laurence Barker and Olcay Co\c{s}kun for sharing this with me.

Another important element in this note will be the Yoneda-Dress construction of the Burnside ring functor $B$ at $C$, denoted by $B_C$. It assigns to a finite group $G$ the Burnside ring $B(G\times C)$, and it is a Green biset functor. Since the monomial Burnside ring of $G$ with coefficients in $C$ is a subgroup of $B_C(G)$, we will denote it by $B_C^1(G)$. 
We will see that there are various similarities between $B_C$ and  $B_C^1$.

\section{Definitions}

All groups in this note will be finite. 

$R$ will denote a commutative ring with identity.
 
Given a group $G$, we will denote its center by $Z(G)$. The Burnside ring of $G$ will be denoted by $B(G)$, and $RB(G)$ if it has coefficients in $R$. 

\begin{defi}
Let $C$ be an abelian group and $G$ be any group. A finite $C$-free $(G\times C)$-set is called a $C$-fibred $G$-set. 

A $C$-orbit of a $C$-fibred $G$-set is called a fibre.


The monomial Burnside ring for $G$ with coefficients in $C$, denoted by $B^1_C(G)$, is the abelian subgroup of $B(G\times C)$ generated by the $C$-fibred $G$-sets. We write  $RB_C^1(G)$ if we are taking coefficients in $R$.
\end{defi}

If $X$ is a $C$-fibred $G$-set, denote by $[X]$ its set of fibres. Then $G$ acts on $[X]$ and $X$ is $(G\times C)$-transitive if and only if $[X]$ is $G$-transitive. In this case, $[X]$ is isomorphic as $G$-set to $G/D$ for some $D\leqslant G$ and we can define a group homomorphism $\delta: D\rightarrow C$ such that if $D$ is the stabilizer of the orbit $Cx$, then $ax=\delta(a)x$ for all $a\in D$. The subgroup $D$ and the morphism $\delta$ determine $X$, since 
$Stab_{G\times C}(x)$ is equal to $\{(a,\, \delta(a)^{-1})\mid a\in D\}$.

\begin{nota}
Given  $D\leqslant G$ and $\delta:D\rightarrow C$ a group homomorphism, we will write $D_{\delta}$ for $\{(a,\, \delta(a)^{-1})\mid a\in D\}$ and $C_{\delta}G/D$ for the $C$-fibred $G$-set $(G\times C)/D_{\delta}$. We will write $CG/D$ if $\delta$ is the trivial morphism.
The morphism $\delta$ is called a $C$-subcharacter of $G$.
\end{nota}

The $C$-subcharacters of $G$ admit an action of $G$ by conjugation $^g(D,\, \delta)=(^gD,\, ^g\delta)$ and with this action we have:

\begin{rem}[2.2 in Barker \cite{bamon}]
As an abelian group
\begin{displaymath}
B_C^1(G)=\bigoplus_{(D,\, \delta)}\mathbb{Z}[C_{\delta}G/D]
\end{displaymath}
where $(D,\, \delta)$ runs over a set of representatives of the $G$-classes of $C$-subcharacters of $G$.
\end{rem}

The following notations are explained in more detail in Bouc \cite{biset}. Given $U$ an $(H,\, G)$-biset and $V$ a $(K,\, H)$-biset, the composition of $V$ and $U$ is denoted by $V\times_HU$. With this composition we know that if $H$ and $G$ are groups and $L\leqslant H\times G$, then the corresponding element in $RB(H\times G)$ satisfies the Bouc decomposition (2.3.26 in \cite{biset}):
\begin{displaymath}
 \textrm{Ind}_D^H\times_D \textrm{Inf}_{D/C}^D\times_{D/C} \textrm{Iso}(f)\times_{B/A} \textrm{Def}_{B/A}^B\times_B \textrm{Res}_B^G
\end{displaymath}
with $C\trianglelefteqslant D\leqslant H$, $A\trianglelefteqslant B\leqslant G$ and $f:B/A\rightarrow D/C$ an isomorphism.

\begin{nota}
As it is done in \cite{mine},  we will write $B_C$ for the Yoneda-Dress construction of the Burnside ring functor $B$ at $C$. 
\end{nota}

The functor $B_C$ is defined as follows. In objects, it sends a group $G$ to $B(G\times C)$. In arrows, for a $(G,\, H)$-biset $X$, the map $B_C(X):B_C(H)\rightarrow B_C(G)$ is the linear extension  of the correspondence $T\mapsto X\times_H T$, where $T$ is an $(H\times C)$-set and $X\times_HT$ has the natural action of $(G\times C)$-set coming from the action of $C$ on $T$.

We will denote by $T_{C-f}$ the subset of elements of $T$ in which $C$ acts freely. Clearly, it is an $H$-set.

\begin{lema}
\label{isgreen}
Assigning to each group $G$ the $\mathbb{Z}$-module $B^1_C(G)$ defines a Green biset functor.
\end{lema}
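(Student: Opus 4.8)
The plan is to realize $B_C^1$ as a quotient (equivalently, a direct retract) of the Green biset functor $B_C$, so that all the Green axioms are inherited and only closure statements remain to be checked. The intro already grants that $B_C$ is a Green biset functor, so I would lean on that throughout rather than verify associativity, unitality and functoriality of the product from scratch.

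First I would record the relevant decomposition. By the remark of Barker quoted above, for each group $G$ the free $\zze$-basis of $B_C(G)=B(G\times C)$ given by the transitive $(G\times C)$-sets splits into those on which $C$ acts freely---whose span is exactly $B_C^1(G)$---and those on which $C$ does not act freely. Writing $L(G)$ for the span of the latter, we obtain $B_C(G)=B_C^1(G)\oplus L(G)$ as abelian groups, and the projection onto the first summand is precisely $T\mapsto T_{C-f}$. The small point to make clean here is that the $C$-stabilizer of a point is constant along its $(G\times C)$-orbit (because $1\times C$ is a direct factor, hence normal, and is centralized by $G\times 1$), so ``$C$ acts freely'' is a well-defined property of orbits and $T_{C-f}$ is genuinely a union of orbits.

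Next I would show that $L$ is a \emph{Green biset functor ideal} of $B_C$, i.e. a biset subfunctor that is also two-sided for the product. For the biset action the argument is uniform over all five elementary bisets: for any $(G,H)$-biset $X$ one has $B_C(X)(T)=X\times_H T$, and if $(1,c)$ with $c\neq 1$ fixes $t\in T$, then it fixes the class $[x,t]$ for every $x$; hence $X\times_H T$ has no $C$-free orbit whenever $T$ has none, giving $B_C(X)(L(H))\subseteq L(G)$. The same stabilizer argument applied to the Yoneda--Dress product, which contracts the two copies of $C$ via the multiplication $C\times C\to C$ (so that a single $c$ acts on the class $[s,t]$ by $[cs,t]=[s,ct]$), shows that a non-$C$-free factor forces the whole product to be non-$C$-free; thus $L(G)\times B_C(H)\subseteq L(G\times H)$ and $B_C(G)\times L(H)\subseteq L(G\times H)$.

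Finally I would conclude: since $L$ is a biset subfunctor which is also an ideal for $\times$, the quotient $B_C/L$ is a Green biset functor, and the splitting identifies it with $B_C^1$, the induced product being the tensor of $C$-fibred sets $C_{\delta}G/D\otimes C_{\epsilon}H/E=C_{\delta\epsilon}(G\times H)/(D\times E)$ and the identity being the class of the free rank-one $C$-set $\lae_{B_C}\in B_C^1(1)$. Concretely the structure maps on $B_C^1$ are then $T\mapsto (X\times_H T)_{C-f}$. I expect the only real content to lie in the closure of $L$ under deflation: deflation is exactly the operation that fails to preserve $C$-freeness (quotienting by a normal subgroup can merge a free $C$-orbit into a non-free one), so it is the place where passing to a quotient, rather than to a naive subfunctor of $B_C$, is forced---and the stabilizer argument above is what makes even this case routine.
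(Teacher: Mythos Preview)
Your argument is correct, but it takes a genuinely different route from the paper for the Green part of the statement.

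For the biset-functor part the two approaches coincide: the paper's key identity $(Z\times_G W_{C\text{-}f})_{C\text{-}f}=(Z\times_G W)_{C\text{-}f}$ is exactly the statement that the non-$C$-free part $L$ is stable under every biset operation, i.e.\ that $L$ is a biset subfunctor, which is what you prove. The paper just phrases it as ``the projection $(-)_{C\text{-}f}$ commutes with composition of bisets'' rather than ``the kernel of that projection is a subfunctor''.

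For the Green structure the paper does \emph{not} pass through $B_C$: it defines the Dress tensor $T\otimes Y$ on $B_C^1$ from scratch and then checks associativity, the unit, and functoriality with respect to bisets by explicit set-level isomorphisms. You instead show that $L$ is a two-sided ideal for the product on $B_C$, so that all the Green axioms are inherited by the quotient $B_C/L\cong B_C^1$. This is shorter and more conceptual, but it depends on knowing that the Green product on $B_C$ from \cite{mine} is the deflation along the multiplication $C\times C\to C$ (the anti-diagonal quotient) rather than, say, the diagonal restriction; with the diagonal restriction your ideal claim would fail, since a non-$C$-free factor tensored with a $C$-free one becomes $C$-free. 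The reference in the paper to Section~4.2 of \cite{mine}, where composition in $\cp_{B_C}$ is described via the action $(h,c)(t,y)=((h,c)t,(h,c^{-1})y)$, confirms that the deflation product is indeed the one in use, so your reliance on it is legitimate---but it would be worth stating this explicitly. The paper's direct verification buys self-containment; your ideal argument buys brevity and a clean explanation of \emph{why} the $(-)_{C\text{-}f}$ projection is forced, namely because $B_C^1$ fails to be a subfunctor under deflation.
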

\begin{proof}
We first prove it is a biset functor.

Let $G$ and $H$ be groups and $X$ be a finite $(G,\, H)$-biset.  
Let $T$ be a $C$-fibred $H$-set. We define $B^1_C(X)(T)=(B_C(X)(T))_{C-f}$.

To prove that composition is associative, let $Z$ be a $(K,\, G)$-biset. We must show
\begin{displaymath}
((Z\times_G X)\times_{H}T)_{C-f}\cong (Z\times_{G}(X\times_{H}T)_{C-f})_{C-f}.
\end{displaymath}
We claim that the right hand-side of this isomorphism is equal to $(Z\times_{G}(X\times_{H}T))_{C-f}$. To prove it, we prove that in general, if $W$ is a $(G\times C)$-set, then $(Z\times_GW_{C-f})_{C-f}$ is equal to $(Z\times_GW)_{C-f}$.
Let $[z,\, w]$ be an element in $(Z\times_GW)_{C-f}$. The element $[z,\, w]$ is an orbit for which any representative has the form  $(zg^{-1},\, gw)$ with $g\in G$. To prove that $gw$ is in $W_{C-f}$, suppose $cgw=gw$. Then, $[z,\, w]=[z,\, cw]$ and this is equal to $c[z,\, w]$, so $c=1$. The other inclusion is obvious.

It remains then to prove
\begin{displaymath}
((Z\times_{G} X)\times_{H}T)_{C-f}\cong (Z\times_{G}(X\times_{H}T))_{C-f}.
\end{displaymath}
as $(K\times C)$-sets, which holds because $B_C$ is a biset functor.

Next we prove it is a Green biset functor. 

Following Dress \cite{dmon}, we define the product
\begin{displaymath}
B_C^1(G)\times B_C^1(H)\rightarrow B_C^1(G\times H)
\end{displaymath}
on the $C$-fibred $G$-set $T$ and the $C$-fibred $H$-set $Y$ as the set of $C$-orbits of $T\times Y$ with respect to the action $c(t,\, y)=(ct,\, c^{-1}y)$. The orbit of $(t,\, y)$ is denoted by $t\otimes y$. We extend this product by linearity and denote it by $T\otimes Y$. The action of $C$ in $t\otimes y$ is given by $ct\otimes y$ and so it is easy to see that $C$ acts freely on $T\otimes Y$. The identity element in $B_C^1(1)$ is the class of $C$. It is not hard to see that this product is associative and respects the identity element. To prove it is functorial, take $X$ a $(K,\, H)$-biset and $Z$ an $(L,\, G)$-biset. We must show that
\begin{displaymath}
(Z\times_{G}T)_{C-f}\otimes (X\times_{H}Y)_{C-f}\cong \big((Z\times X)\times_{G\times H}(T\otimes Y)\big)_{C-f} 
\end{displaymath}
as $(K\times L\times C)$-sets.
We can prove this in two steps: First, it is easy to observe that for any $C$-sets $N$ and $M$, the product $M_{C-f}\otimes N_{C-f}$ is isomorphic as $C$-set to $(M\otimes N)_{C-f}$. Then it remains to prove 
\begin{displaymath}
(Z\times_{G}T)\otimes (X\times_{H}Y)\cong (Z\times X)\times_{H\times G}(T\otimes Y)
\end{displaymath}
as $(K\times L\times C)$-sets. If $[z,\, t]\otimes [x,\, y]$ is an element on the left hand-side, then sending it to $[(z,\, x),\, t\otimes y]$ defines the desired isomorphism of $(K\times L\times C)$-sets.
 \end{proof}

\section{Fibred biset functors}

The category $\cp_{RB_C^1}$, mentioned in the introduction and defined in Section 4 of \cite{mine}, has for objects the class of all finite groups; the set of morphisms from $G$ to $H$ is the abelian group $RB_C^1(H\times G)$ and composition is given in the following way: If $T\in RB_C^1(G\times H)$ and $Y\in RB_C^1(H\times K)$, then
$T\circ Y$ is given by restricting $T\otimes Y$ to $G\times \Delta (H)\times K$ and then deflating the result to $G\times K$. The identity element in $RB_C^1(G\times G)$ is the class of $C(G\times G)/\Delta(G)$. As it is done in section 4.2 of \cite{mine}, composition $\circ$ can be obtained by first taking the orbits of $T\times Y$ under the $(H\times C)$-action given by
\begin{displaymath}
(h,\, c)(t,\, y)=((h,\, c)t,\, (h,\, c^{-1})y),
\end{displaymath}  
and then choosing the orbits in which $C$ acts freely. 

\begin{defi}
From Proposition 2.11 in \cite{mine}, the category of $RB_C^1$-modules is equivalent to the category of $R$-linear functors from $\cp_{RB_C^1}$ to $R$-Mod. These functors are called fibred biset functors.
\end{defi}

\begin{nota}
Let $E$ be a subgroup of $H\times K\times C$. We will write $p_1(E)$, $p_2(E)$ and $p_3(E)$ for the projections of $E$ in $H$, $K$ and $C$ respectively; $p_{1,\, 2}(E)$ will denote the  projection over $H\times K$, and in the same way we define the other possible combinations of indices. We write $k_1(E)$ for
$\{h\in p_1(E)\mid  (h,\, 1,\,1)\in E\}$. Similarly, we define $k_2(E)$, $k_3(E)$ and $k_{i,\, j}(E)$ for all possible combinations of $i$ and $j$.
\end{nota}

The following formula was already known to Boltje and Co\c skun. 
Here we prove it as an explicit expression of composition $\circ$ in the
category $\cp_{RB_C^1}$. The proof follows the lines of Lemma 4.5 in \cite{mine}. 

The definition of the product $*$ can be found in Notation 2.3.19 of \cite{biset}.

\begin{lema}
\label{comp}
Let $X=[C_{\nu}(G\times H)/V]\in RB_C^1(G\times H)$ and $Y=[C_{\mu}(H\times K)/U]\in RB_C^1(H\times K)$ be two transitive elements. Then the composition $X\circ Y\in RB_C^1(G\times K)$ in the category $\cp_{RB_C^1}$ is isomorphic to
\begin{displaymath}
\bigsqcup_{h\in S}C_{\nu\mu^h}(G\times K)/(V*{}^{(h,\, 1)}U).
\end{displaymath}
The notation is as follows: Let $[p_2(V)\!\setminus\! H /p_1(U)]$ be a set of representatives of the double cosets of $p_2(V)$ and $p_1(U)$ in $H$, then $S$ is the subset of elements $h$ in $[p_2(V)\!\setminus\! H /p_1(U)]$ such that $\nu(1,\, h')\mu(h'^h,\, 1)=1$ for all $h'$ in $k_2(V)\cap {}^h\!k_1(U)$; by $\nu\mu^h$ 
we mean the morphism from $V*{}^{(h,\, 1)}U$ to $C$ defined by $\nu\mu^h(g,\, k)=\nu(g,\, h_1)\mu(h_1^h,\, k)$ when $h_1$ is an element in $H$ such that $(g,\, h_1)$ in $V$ and $(h_1,\, k)$ in $^{(h,\, 1)}U$.
\end{lema}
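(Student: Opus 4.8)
The plan is to work directly from the orbit description of the composition $\circ$ recalled above: form the product set $X\times Y$, take its orbits under the $(H\times C)$-action $(h,c)(t,y)=((h,c)t,(h,c^{-1})y)$, and keep exactly those orbits on which the residual diagonal copy of $C$ acts freely. Throughout it is useful to keep in mind that of the two copies of $C$ present on $X\times Y$, the anti-diagonal one is absorbed by $\otimes$ and the diagonal one is the residual $C$ whose free action must be tested.

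First I would forget the fibres and run the computation for the underlying bisets $(G\times H)/V$ and $(H\times K)/U$. Here the Bouc--Mackey decomposition (2.3.26 in \cite{biset}, exactly as in Lemma 4.5 of \cite{mine}) already supplies the answer: the orbits are indexed by the double cosets $h\in[p_2(V)\setminus H/p_1(U)]$, the orbit of $h$ has underlying biset $(G\times K)/(V*{}^{(h,1)}U)$, and a representative may be taken to be $p_h=(x_0,(h,1,1)y_0)$, where $x_0$ and $y_0$ are the canonical points with stabilisers $V_\nu$ and $U_\mu$. This fixes the index set and the subgroups $V*{}^{(h,1)}U$ appearing in the statement and reduces everything to fibre bookkeeping.

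Next I would compute, for each fixed $h$, the stabiliser in $G\times K\times C$ (with $C$ the residual diagonal copy) of the class of $p_h$. Carrying an element $(g,k,c)$ back into the $(H\times C)$-orbit of $p_h$ forces the existence of $h_1\in H$ with $(g,h_1)\in V$ and $(h_1^h,k)\in U$ and yields $c=(\nu(g,h_1)\mu(h_1^h,k))^{-1}$; that is, the stabiliser is $(V*{}^{(h,1)}U)_{\eta}$ for $\eta(g,k)=\nu(g,h_1)\mu(h_1^h,k)$, which is the proposed subcharacter $\nu\mu^h$. The crucial observation is then the single computation that two intermediate elements $h_1,h_2$ for the same $(g,k)$ differ by $h'=h_1^{-1}h_2\in k_2(V)\cap{}^h k_1(U)$, and that $\nu$ and $\mu$, being homomorphisms, multiply out to change $\eta(g,k)$ exactly by the factor $\nu(1,h')\mu(h'^h,1)$.

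This one identity settles both remaining points at once, and this coincidence is what I expect to be the heart of the argument. The factor $\nu(1,h')\mu(h'^h,1)$ is trivial for every $h'\in k_2(V)\cap{}^h k_1(U)$ if and only if $\eta$ is a well-defined homomorphism, and (taking $g=k=1$) if and only if the residual $C$-stabiliser of $p_h$ meets the diagonal $C$ trivially, i.e. the residual $C$ acts freely on the orbit; a nontrivial value of the factor exhibits an element of $C$ fixing the class of $p_h$. Hence the orbit of $h$ survives the $C$-free selection precisely when $h\in S$, in which case it contributes the transitive piece $C_{\nu\mu^h}(G\times K)/(V*{}^{(h,1)}U)$, while for $h\notin S$ it is discarded. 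Taking the disjoint union over $h\in S$ gives the stated formula. I expect the only genuine difficulty to be the stabiliser computation of the third paragraph, namely keeping the two copies of $C$ straight across $\bres$ and $\bdef$ and verifying that the obstruction to $C$-freeness is governed exactly by the overlap kernel $k_2(V)\cap{}^h k_1(U)$; the biset skeleton is routine by comparison.
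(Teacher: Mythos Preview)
Your proposal is correct and follows essentially the same route as the paper's proof: form the $(H\times C)$-orbit space $W$ of $X\times Y$, observe that the $(G\times K\times C)$-orbits of $W$ are indexed by $[p_2(V)\backslash H/p_1(U)]$ with representatives $[(1,1,1)V_\nu,(h,1,1)U_\mu]$, then compute the stabiliser and the $C$-freeness condition directly. The paper carries out the orbit and stabiliser computations simultaneously rather than first isolating the underlying biset via the Bouc--Mackey formula, and it records the equivalence ``$\nu\mu^h$ is well defined $\Leftrightarrow$ the factor $\nu(1,h')\mu(h'^h,1)$ vanishes on $k_2(V)\cap{}^hk_1(U)$'' as a preliminary remark rather than deriving it inside the stabiliser calculation; but the substance of the argument, including the key identification of the obstruction with that overlap kernel, is the same.
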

\begin{proof}
Notice that $\nu\mu^h$ is a  function if and only if $\nu(1,\, h')\mu(h'^h,\, 1)=1$ for all $h'\in k_2(V)\cap {}^hk_1(U)$.

Let $W$ be the $(G\times K\times C)$-set obtained by taking the orbits of $X\times Y$ under the action of $H\times C$
\begin{displaymath}
(h,\, c)(x,\, y)=((h,\, c)x,\, (h,\, c^{-1})y).
\end{displaymath} 
for all $c\in C$, $h\in H$, $x\in X$, $y\in Y$.

Now let $[(g,\, h,\, c)V_{\nu},\,  (h',\, k,\, c')U_{\mu}]$ be an element in $W$. Then its orbit under the action of $G\times K\times C$ is equal to the orbit of $[(1,\, 1,\, 1)V_{\nu},\, (h^{-1}h',\, 1,\, 1)U_{\nu}]$. From this it is not hard to see that the orbits of $W$ are indexed by $[p_2(V)\setminus H /p_1(U)]$.   To find the orbits in which $C$ acts freely, suppose $c\in C$ fixes $[(1,\, 1,\, 1)V_{\nu},\, (h,\, 1,\, 1)U_{\mu}]$. This means there exists $(h',\, c')\in H\times C$ such that
\begin{displaymath}
(1,\, 1,\, c)V_{\nu}=(h',\, 1,\, c')V_{\nu}\quad \textrm{and}\quad (h,\, 1,\, 1)U_{\mu}=(h'h,\, 1,\, c'^{-1})U_{\mu}. 
\end{displaymath}
Hence $\nu(h',\, 1)=c'^{-1}c$ and $\mu(h^{-1}h'h,\,1)=c'$. So that, $c$ is equal to $\mu(h^{-1}h'h,\,1)\nu(h',\, 1)$, which gives us the condition on the set $S$. 

The fact that the stabilizer on $G\times K\times C$ of $[(1,\,1,\,1)V_{\nu},\, (h,\, 1,\,1)U_{\mu}]$ is the subgroup $(V*{}^{(h,\, 1)}U)_{\nu\mu^h}$ follows as in the previous paragraph.
\end{proof}

The following Lemma and Corollary state for $RB_C^1$ analogous results proved for $RB_C$ in \cite{mine}.

\begin{lema}
\label{boucs}
Let $X=C_{\delta}(G\times H)/D$ be a transitive element in $RB_C^1(G\times H)$. Denote by $e$ the natural transformation from $RB$ to $RB_C^1$ defined in a $G$-set $X$ by $e_G(X)=X\times C$. Consider $E=p_1(D)$, $E'=E/k_1(D_{\delta})$, $F=p_2(D)$, $F'=F/k_2(D_{\delta})$. Then $X$ can be decomposed in $\cp_{RB_C^1}$ as 
\begin{displaymath}
e_{G\times E'}(\bind^{\,G}_E\times_E\binf^{\,\, E}_{E'})\circ \beta_1\quad \textrm{and as}\quad \beta_2\circ e_{F'\times H}(\bdef^{\,\, F}_{F'}\times_F\bres^{\,H}_F)
\end{displaymath}
for some  $\beta_1\in RB_C^1(E'\times H)$, $\beta_2\in RB_C^1(G\times F')$.
\end{lema}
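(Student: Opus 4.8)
The plan is to prove the first decomposition by writing down $\beta_1$ explicitly and checking the claimed identity with the composition formula of Lemma \ref{comp}; the second decomposition will then follow by a symmetric computation. First I would record that, because $C$ is abelian, $k_1(D_\delta)$ is normal in $E=p_1(D)$: for $g\in k_1(D_\delta)$ and $(e,h)\in D$ we have $(ege^{-1},1)\in D$ and $\delta(ege^{-1},1)=\delta(e,h)\delta(g,1)\delta(e,h)^{-1}=1$, so $ege^{-1}\in k_1(D_\delta)$. Thus $E'=E/k_1(D_\delta)$ is a well-defined group, and the same argument gives $F'=F/k_2(D_\delta)$. Writing $q\colon E\to E'$ for the projection, the biset $\bind_E^G\times_E\binf_{E'}^E$ is the transitive $(G,E')$-biset attached to the subgroup $V=\{(g,q(g))\mid g\in E\}\leqslant G\times E'$, so that $e_{G\times E'}(\bind_E^G\times_E\binf_{E'}^E)=C_1(G\times E')/V$, a transitive fibred biset with trivial subcharacter.

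For the other factor I would take $\beta_1=C_{\delta'}(E'\times H)/D'$, where $D'=\{(q(g),h)\mid (g,h)\in D\}$ is the image of $D$ under $q\times\mathrm{id}_H$ and $\delta'(q(g),h)=\delta(g,h)$. The character $\delta'$ is well defined exactly because the kernel of $(q\times\mathrm{id}_H)|_D$ is $k_1(D_\delta)\times 1$, on which $\delta$ is trivial by the very definition of $k_1(D_\delta)$; this is the one verification that must not be skipped.

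It then remains to evaluate the composition $e_{G\times E'}(\bind_E^G\times_E\binf_{E'}^E)\circ\beta_1$ by Lemma \ref{comp}, composing over $E'$. Since $q$ is surjective, $p_2(V)=E'=p_1(D')$, so $[p_2(V)\setminus E'/p_1(D')]$ is a single point and only the representative $1$ contributes; moreover $k_2(V)=1$, so the condition defining $S$ is vacuous and $1\in S$. A short computation using $k_1(D_\delta)\times 1\leqslant D$ then gives $V*D'=D$, while the subcharacter $\nu\mu^h$ of the formula, evaluated at the representative $1$, sends $(g,h)$ to $1\cdot\delta(g,h)$. Hence the composition is $C_\delta(G\times H)/D=X$, which is the first assertion.

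For the second decomposition I would argue symmetrically: writing $r\colon F\to F'$ for the projection, one has $e_{F'\times H}(\bdef_{F'}^F\times_F\bres_F^H)=C_1(F'\times H)/U'$ with $U'=\{(r(f),f)\mid f\in F\}$, and I would set $\beta_2=C_{\delta''}(G\times F')/D''$ with $D''=\{(g,r(h))\mid (g,h)\in D\}$ and $\delta''(g,r(h))=\delta(g,h)$, again well defined because $\delta$ is trivial on $k_2(D_\delta)$. Applying Lemma \ref{comp} over $F'$, the same phenomenon ($p_2(D'')=F'=p_1(U')$ and $k_1(U')=1$) collapses the sum to a single term with $D''*U'=D$ and subcharacter $\delta$, giving $\beta_2\circ e_{F'\times H}(\bdef_{F'}^F\times_F\bres_F^H)=X$. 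The underlying biset identities are the classical Bouc decomposition, so the only real work is the bookkeeping of subcharacters; accordingly the main obstacle is to confirm that $\delta'$ and $\delta''$ are well defined and that the product rule for $\nu\mu^h$ in Lemma \ref{comp} returns $\delta$ on the nose — both of which come down to the triviality of $\delta$ on $k_1(D_\delta)$ and $k_2(D_\delta)$.
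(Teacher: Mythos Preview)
Your proof is correct and follows essentially the same route as the paper's: you identify $e_{G\times E'}(\bind_E^G\times_E\binf_{E'}^E)$ with $C(G\times E')/V$ for $V=\{(g,q(g))\mid g\in E\}$, take $\beta_1$ to be the fibred biset on the image of $D$ under $q\times\mathrm{id}_H$ with the induced subcharacter, and verify the composition via Lemma~\ref{comp} using $p_2(V)=p_1(D')=E'$ and $k_2(V)=1$. The paper's argument is the same, only phrased through the Goursat isomorphism $\sigma:p_1(D)/k_1(D)\to p_2(D)/k_2(D)$ to describe the subgroup $W=D'$; your description as a direct image is equivalent and arguably cleaner, and you add the explicit check that $k_1(D_\delta)\trianglelefteq E$, which the paper leaves implicit.
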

\begin{proof}
We will only prove the existence of the first decomposition, since the proof of the second one follows by analogy.

Observe that $e_{G\times E'}(\bind^{\,G}_E\times_E\binf^{\,\,E}_{E'})$ is the $C$-fibred $(G\times E')$-set $C(G\times E')/U$ where $U=\{(g, gk_1(V_{\delta}))\mid g\in E\}$. 

Consider the isomorphism $\sigma$ from $p_1(D)/k_1(D)$ to $p_2(D)/k_2(D)$, existing by Goursat's Lemma 2.3.25 in \cite{biset}. Define $\beta_1$ as $C_{\omega}(E'\times H)/W$ where
\begin{displaymath}
 W=\{(gk_1(D_{\delta}),\, h)\mid \textrm{if } \sigma(gk_1(D))=hk_2(D)\}
\end{displaymath}
 and $\omega :W\rightarrow C$ by $\omega (gk_1(D_{\delta}),\, h)=\delta(g, h)$. That $W$ is a group follows from $k_1(D_{\delta})\leqslant k_1(D)$. The extension of $\delta$ to $W$ is well defined, since it is not hard to see that $k_1(D_{\delta})$ is equal to $k_1(Ker(\delta))$.
Also, since $p_2(U)=p_1(W)=E'$ and $k_2(U)=1$, by the previous lemma, $e_{G\times E'}(\bind^{\,G}_E\times_E\binf^{\,\, E}_{E'})\circ \beta_1$ is isomorphic to $C_{\delta}(G\times H)/(U*W)$. Finally, $U*W=\{(g,\, h)\mid \sigma(gk_1(D))=hk_2(D)\}$, and by Goursat's Lemma, this is equal to $D$.
\end{proof}

This decomposition leads us to the same conclusions we obtained from Lemma 4.8 of \cite{mine} for $RB_C$. That is, if $G$ and $H$ have the same order $n$ and 
$C_{\delta}(G\times H)/D$ does not factor through $\circ$ by a group of order smaller than $n$, then we must have $p_1(D)=G$, $p_2(D)=H$, $k_1(D_{\delta})=1$ and $k_2(D_{\delta})=1$. In particular, Corollary 4.9 of the same reference is also valid, so we have: 

\begin{coro}
\label{prim}
Let $C$ be a group of prime order and $S$ be a simple $RB_C^1$-module. If $H$ and $K$ are two minimal groups for $S$, then they are isomorphic.
\end{coro}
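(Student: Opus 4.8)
The plan is to reduce the statement to a property of a single ``surviving'' transitive fibred biset between $H$ and $K$, and then to read off the isomorphism from Goursat's lemma, using the prime order of $C$ at exactly one point. Write $A=RB_C^1$ throughout. Since $H$ and $K$ are both minimal for $S$ they have the same order, say $n$. Because $S$ is simple and $S(H)\neq 0$, the subfunctor generated by $S(H)$ is all of $S$, so $S(K)=\sum_{\phi}\phi\big(S(H)\big)$ where $\phi$ ranges over $A(K\times H)=Hom_{\cp_A}(H,K)$. As $S(K)\neq 0$, some $\phi$ acts nontrivially on $S(H)$, and by $R$-linearity some transitive summand $\alpha=C_{\delta}(K\times H)/D$ of $\phi$ already satisfies $\alpha\cdot S(H)\neq 0$.

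First I would check that this $\alpha$ cannot factor through $\circ$ by a group of order smaller than $n$: if $\alpha=\gamma\circ\eta$ with $\eta\in A(L\times H)$ and $|L|<n$, then $\eta\cdot S(H)\subseteq S(L)=0$ by minimality, forcing $\alpha\cdot S(H)=0$, a contradiction. Hence, by the consequences of Lemma~\ref{boucs} stated above, the subgroup $D\leqslant K\times H$ satisfies $p_1(D)=K$, $p_2(D)=H$, and $k_1(D_{\delta})=k_2(D_{\delta})=1$.

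The heart of the argument, and the only place the prime order of $C$ enters, is the analysis of $\ker\delta\leqslant D$. The conditions $k_1(D_{\delta})=k_2(D_{\delta})=1$ say exactly that $\ker\delta\cap(K\times 1)=1$ and $\ker\delta\cap(1\times H)=1$, so both projections restrict to injective maps on $\ker\delta$; thus $\ker\delta$ is the graph of an isomorphism between $p_1(\ker\delta)\leqslant K$ and $p_2(\ker\delta)\leqslant H$. It remains to see that these projections are surjective, i.e. that $|\ker\delta|=n$. Counting in $D$ gives $|D|=n\,|k_1(D)|=n\,|k_2(D)|$, so $|k_1(D)|=|k_2(D)|$, and since $\delta$ is injective on $k_1(D)$ this common order divides $|C|=p$. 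If it equals $1$, then $D$ itself is the graph of an isomorphism $K\cong H$. If it equals $p$, then $\delta(k_1(D))=C$, so $\mathrm{im}\,\delta=C$ has order $p=|k_1(D)|$, whence $|\ker\delta|=|D|/p=n$ and $\ker\delta$ is the graph of an isomorphism $K\cong H$. In either case $H\cong K$.

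I expect the main obstacle to be precisely the identity $|\mathrm{im}\,\delta|=|k_1(D)|$ in the order-$p$ case: it is what forces $\ker\delta$ to be large enough to be a graph, and it relies on $k_1(D)$, as an injective image of $\delta$, being either trivial or all of $C$. This is exactly the slack that disappears only for prime fibre groups; for a fibre group with a proper nontrivial subgroup, $k_1(D)$ can be that subgroup while $\delta$ remains surjective, so $\ker\delta$ becomes too small and the argument collapses. It is reassuring that the proof breaks down in just this way for the cyclic fibre group of order four, matching the counterexample.
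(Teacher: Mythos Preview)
Your argument is correct and follows the same route the paper indicates: produce a transitive fibred biset between the two minimal groups that does not factor through a smaller group, use Lemma~\ref{boucs} to force $p_1(D)=K$, $p_2(D)=H$, $k_1(D_{\delta})=k_2(D_{\delta})=1$, and then invoke the prime order of $C$ to extract an isomorphism via Goursat. The paper simply cites Corollary~4.9 of \cite{mine} (the $RB_C$ case) for this last step, whereas you have written it out explicitly; your case split on $|k_1(D)|\in\{1,p\}$ is exactly the content that citation hides.
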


We will be back to the classification of simple $RB_C^1$-modules for $C$ of prime order in the last section of the article. Now, we will  find the counterexample mentioned in the introduction.

\subsection*{The counterexample}

In Section 2 of \cite{mine}, given a Green biset functor $A$ defined in a class of groups $\zz$, we defined $I_A(G)$ as the submodule of $A(G\times G)$ generated by elements of the form $a\circ b$, where $a$ is in $A(G\times K)$, $b$ is in $A(K\times G)$ and $K$ is a group in $\zz$ of order smaller than $|G|$. We denote by $\hat{A}(G)$ the quotient $A(G\times G)/I_A(G)$. From Section 4 of \cite{mine}, we also know that if $V$ is a simple $\hat{A}(G)$-module, we can construct a simple $A$-module that has $G$ as a minimal group. This $A$-module is defined as the quotient $L_{G,\, V}/J_{G,\, V}$, where $L_{G,\, V}$ is defined as $A(D\times G)\otimes_{A(G\times G)}V$ for $D\in \zz$ and $L_{G,\, V}(a)(x\otimes v)=(a\circ x)\otimes v$ for $a\in A(D'\times D)$. The subfunctor $J_{G,\, V}$ is defined as
\begin{displaymath}
 J_{G,\, V}(G)=\Big\{\sum_{i=1}^nx_i\otimes n_i\mid 
 \sum_{i=1}^n(y\circ x_i)\cdot n_i=0\ \forall y \in A(G\times D) \Big\}.
\end{displaymath} 

To construct the counterexample we will take coefficients in a field $k$. We will find a group $C$ and a simple $kB_C^1$-module $S$ which has two non-isomorphic minimal groups.

\begin{lema}
\label{idem}
Let $C$ be a cyclic group and $G$ and $H$ be groups. Suppose that $D\leqslant G\times H$ is such that $p_1(D)=G$ and $p_2(D)=H$. Let $\delta:D\rightarrow C$ be a morphism of groups. We will write $D^o=\{(h,\,g)\mid (g,\,h)\in D\}$  and define $\delta^o:D^o\rightarrow C$  as $\delta^o(h,\, g)=\delta(g,\, h)^{-1}$. If $X=C_{\delta}(G\times H)/D$ and $X^o=C_{\delta^o}(H\times G)/D^o$, then $X\circ X^o$ is an idempotent in $B_C^1(G\times G)$.
\end{lema}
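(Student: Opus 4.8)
The plan is to compute $X\circ X^o$ explicitly by one application of Lemma \ref{comp}, observe that it is a single transitive fibred $C$-set $C_{\epsilon}(G\times G)/E$, and then show by a second application of the same lemma that composing this element with itself reproduces it. The whole mechanism is that in both compositions the projections onto the ``middle'' group are surjective, so the double-coset index set appearing in Lemma \ref{comp} collapses to a single class and no sum survives; all the content then lives in the bookkeeping of the associated $C$-subcharacters.

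First I would carry out the composition $X\circ X^o$ via Lemma \ref{comp}, taking $V=D$, $\nu=\delta$, $U=D^o$, $\mu=\delta^o$, with middle group $H$. Since $p_2(D)=H$ and $p_1(D^o)=p_2(D)=H$, the set $[p_2(V)\setminus H/p_1(U)]$ is just the class of $1$. For $h=1$ one has $k_2(D)=k_1(D^o)$, and for $h'$ in this group the defining product is $\delta(1,h')\,\delta^o(h',1)=\delta(1,h')\,\delta(1,h')^{-1}=1$; here the definition $\delta^o(h,g)=\delta(g,h)^{-1}$ is exactly what makes the product cancel, so $S=\{1\}$. Thus $X\circ X^o=C_{\epsilon}(G\times H\text{-free})$, more precisely $C_{\epsilon}(G\times G)/E$ with $E=D*D^o$ and $\epsilon=\nu\mu^{1}$. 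Goursat's Lemma 2.3.25 in \cite{biset} gives $k_1(D)\trianglelefteqslant G$ and an isomorphism between $G/k_1(D)$ and $H/k_2(D)$; feeding this into $E=\{(g_1,g_2)\mid\exists h,\ (g_1,h),(g_2,h)\in D\}$ yields $E=\{(g_1,g_2)\in G\times G\mid g_1k_1(D)=g_2k_1(D)\}$, a subgroup containing $\Delta(G)$, with subcharacter $\epsilon(g_1,g_2)=\delta(g_1,h_1)\,\delta(g_2,h_1)^{-1}$ for any $h_1\in H$ with $(g_1,h_1),(g_2,h_1)\in D$ (well defined by the lemma since $S=\{1\}$).

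Next I would apply Lemma \ref{comp} to $(C_{\epsilon}(G\times G)/E)\circ(C_{\epsilon}(G\times G)/E)$, whose middle group is $G$. As $\Delta(G)\leqslant E$ we have $p_1(E)=p_2(E)=G$, so the double cosets again collapse to the class of $1$. One computes $k_2(E)=k_1(E)=k_1(D)$, and for $h'\in k_1(D)$, taking $h_1=1$ (legitimate because $(h',1),(1,1)\in D$) gives $\epsilon(1,h')=\delta(h',1)^{-1}$ and $\epsilon(h',1)=\delta(h',1)$, whose product is $1$; hence $S=\{1\}$ once more, and the square equals $C_{\epsilon\epsilon^1}(G\times G)/(E*E)$. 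A direct check gives $E*E=E$, and the decisive observation is $\epsilon(g,g)=1$: choosing the middle index equal to $g_1$ in the definition of $\epsilon\epsilon^1$ yields $(\epsilon\epsilon^1)(g_1,g_2)=\epsilon(g_1,g_1)\,\epsilon(g_1,g_2)=\epsilon(g_1,g_2)$. Therefore $(X\circ X^o)\circ(X\circ X^o)=C_{\epsilon}(G\times G)/E=X\circ X^o$, which is the claim.

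The routine parts are the two invocations of the composition formula and the Goursat identification of $E$; the step that really carries the argument is verifying that $S=\{1\}$ in each composition and that $\epsilon(g,g)=1$. Both facts hinge on the choice $\delta^o(h,g)=\delta(g,h)^{-1}$, which forces the relevant products of subcharacter values over $k_1(D)$ to telescope to the identity, and on $C$ being cyclic (hence abelian), which is what makes these products unambiguous. The main thing to be careful about is to confirm that each composite subcharacter is genuinely single-valued before asserting these equalities, but that well-definedness is precisely what Lemma \ref{comp} guarantees once $S=\{1\}$.
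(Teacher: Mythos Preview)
Your proof is correct and follows essentially the same route as the paper: apply Lemma~\ref{comp} once to compute $X\circ X^o=C_{\epsilon}(G\times G)/E$ with $E=D*D^o$, then apply it a second time to see that this element squares to itself, checking in each case that the double-coset set collapses and that the compatibility condition on the subcharacters holds. The only cosmetic difference is that the paper simplifies your formula $\epsilon(g_1,g_2)=\delta(g_1,h_1)\,\delta(g_2,h_1)^{-1}$ to the closed form $\delta'(g_1,g_2)=\delta(g_1g_2^{-1},1)$ (using that $(g_1,h_1)(g_2,h_1)^{-1}=(g_1g_2^{-1},1)\in D$), which makes the second verification one line; your variant via $\epsilon(g,g)=1$ achieves the same thing.
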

\begin{proof}
Since $\delta(1,\, h)\delta^o(h,\, 1)=1$ for all $h\in k_2(D)$, by Lemma \ref{comp} the composition $X\circ X^o$ is equal to
$W=C_{\delta'}(G\times G)/D'$. Here, $D'=D*D^o$ and if $(g_1,\, g_2)\in D'$ with $h\in H$ being such that $(g_1,\,h)\in D$ and $(h,\, g_2)\in D^o$, then $\delta'(g_1,\, g_2)=\delta(g_1,\, h)\delta^o(h,\, g_2)$. From this it is not hard to see that
$D'=\{(g_1,\, g_2)\mid g_1g_2^{-1}\in k_1(D)\}$ and $\delta'(g_1,\, g_2)=\delta(g_1g_2^{-1},\, 1)$.

Observe that $k_1(D')=k_2(D')=k_1(D)$ and clearly, $\delta'(1,\,g)\delta'(g,\,1)=1$ for all $g\in k_1(D)$. In the same way, if $g_1,\, g_2\in G$ are such that there exists $g\in G$ with $(g_1,\,g)\in D'$ and $(g,\, g_2)\in D'$ then $\delta'(g_1,\, g)\delta' (g,\, g_2)=\delta(g_1g_2^{-1},\, 1)$. Finally,  $p_1(D')=G$ since $gg^{-1}\in k_1(D)$ for all $g\in G$, and it is easy to see that $D'*D'=D'$. So, Lemma \ref{comp} gives us $W\circ W=W$.
\end{proof}

 If now we find two non-isomorphic groups $G$ and $H$ having the same order, and a transitive element $X=C_{\delta}(G\times H)/D$ in $kB_C^1(G\times H)$ with $p_1(D)=G$, $p_2(D)=H$ and such that the class of $W=X\circ X^o$ is different from zero in $\hat{kB_C^1}(G)$, then we can construct a simple $kB_C^1$-module $S$ which has $G$ and $H$ as minimal groups. By the previous lemma, $W$ will be an idempotent in $\hat{kB_C^1}(G)$, so we can find $V$ a simple $\hat{kB_C^1}(G)$-module such that there exists $v\in V$ with $(X\circ X^o)v\neq 0$. From the definition of $S=S_{G,\, V}$, this implies $S_{G,\, V}(H)\neq 0$.

\begin{ejem}
\label{elcontra}
Let $C=<c>$ be a group of order 4, $G$ the quaternion group
\begin{displaymath}
<x,\, y\mid x^4=1,\ yxy^{-1}=x^{-1},\ x^2=y^2>
\end{displaymath}
and $H$ the dihedral group of order 8
\begin{displaymath}
<a,\, b\mid a^4=b^2=1,\ bab^{-1}=a^{-1}>.
\end{displaymath}
Consider the subgroup of $G\times H$ generated by $(x,\, a)$ and $(y,\, b)$, call it $D$. The subgroup of $D$ generated by $(x^{-1},\, a)$ is a normal subgroup of order 4, and the quotient $D/D_1$ is isomorphic to $C$ in such a way that we can define a morphism $\delta: D\rightarrow C$ sending $(x,\, a)$ to $c^2$ and $(y,\, b)$ to $c^{-1}$. It is easy to observe that $p_1(D)=G$, $p_2(D)=H$, $k_1(D)=<x^2>$ and $k_2(D)=<a^2>$. By the previous lemma, we have that if $X=C_{\delta}(G\times H)/D$, then $W=X\circ X^o$ is an idempotent in $kB_C^1(G\times G)$. We will see now that the class of $W$ in $\hat{kB_C^1}(G)$ is different from 0.

Let $D'=D*D^o$ and $\delta':D'\rightarrow C$ be the morphism obtained from $\delta$ as in the previous lemma. Suppose that $W$ is in $I_{kB_C^1}(G)$. Since $W$ is a transitive $(G\times G\times C)$-set, this implies that there exists $K$ a group of order smaller than 8, $U\leqslant G\times K$ and $V\leqslant K\times G$ such that $D'=U*V$ (the conjugate of a group of the form $U*V$ has again this form, so we can suppose $D'=U*V$), and group homomorphisms $\mu :U\rightarrow C$ and $\nu :V\rightarrow C$ such that $\delta'=\mu\nu$ in the sense of Lemma \ref{comp}. 

Now, using point 2 of Lemma 2.3.22 in \cite{biset} and the fact that $p_1(D')=p_1(D)$ and $k_1(D')=k_1(D)$, we have that $p_1(U)=G$ and that $k_1(U)$ can only have order one or two. Since $p_1(U)/k_1(U)$ is isomorphic to $p_2(U)/k_2(U)$ and the latter must have order smaller than 8, we obtain that $k_1(U)$ has order two. This in turn implies that $p_2(U)/k_2(U)$ has order 4, and since $|p_2(U)|<8$, we have $k_2(U)=1$. Hence, $U$ is isomorphic to $G$. Also, since $k_1(U)=k_1(D')$, we have $\mu(x^2,\, 1)=\delta(x^2,\, 1)$. Now, $\delta(x^2,\, 1)\neq1$, but all morphisms from $G$ to $C$ send $x^2$ to 1, a contradiction.
\end{ejem}

\subsection*{Simple fibred biset functors with fibre of prime order}

From now on $C$ will be a group of prime order $p$.

From Corollary \ref{prim}, we have that Conjecture 2.16 of \cite{mine} holds for the functor $RB_C^1$, the proof is a particular case of Proposition 4.2 in \cite{mine}. We will state this result after describing the structure of the algebra $\hat{RB_C^1}(G)$ for a group $G$.

We wil see that if $C_{\delta}(G\times G)/D$ is a transitive $C$-fibred $(G\times G)$-set the class of which is different from $0$ in $\hat{RB_C^1}(G)$, then $D$ can only be of the form $\{(\sigma(g),\, g)\mid g\in G\}$ for $\sigma$ an automorphism of $G$, or of the form $\{(\omega(g)\zeta(c), g)\mid (g,\, c)\in G\times C\}$ for $\omega$ an automorphism of $G$ and $\zeta :C\rightarrow Z(G)\cap \Phi (G)$ an injective morphism of groups where $\Phi (G)$ is the Frattini subgroup of $G$. 
In the first case $\delta$ will be any morphism from $G$ to $C$. In the second case $\delta$ will assign $c^{-1}$ to the couple $(\omega(g)\zeta(c),\, g)$, this is well defined since $\zeta$ is injective. Of course, the second case can only occur if $p$ divides $|Z(G)|$. 

If $p$ does not divide $|Z(G)|$, we will prove that  $\hat{RB_C^1}(G)$ is isomorphic to the group algebra $R\hat{G}$ where $\hat{G}=Hom(G,\, C)\rtimes Out(G)$. If $p$ divides $|Z(G)|$, we will consider $Y_G$ the set of injective morphisms $\zeta: C\rightarrow Z(G)\cap \Phi (G)$ and then define $\mathcal{Y}_G=Out(G)\times Y_G$. The $R$-module $R\mathcal{Y}_G$ forms an $R$-algebra with the product
\begin{equation*}
(\omega,\, \zeta)\circ (\alpha,\, \chi)=\left\{ 
\begin{array}{cl}
(\omega\alpha,\, \omega\chi) & \textrm{ if $\zeta=\omega\chi$}\\
0 & \textrm{ otherwise }
\end{array}\right. 
\end{equation*}
for elements $(\omega,\, \zeta)$ and $(\alpha,\, \chi)$ in $\mathcal{Y}_G$. The algebra $R\mathcal{Y}_G$ can also be made into an $(R\hat{G}, R\hat{G})$-bimodule. 
We could give the definitions of the actions now, and prove directly that $R\mathcal{Y}_G$ is indeed an $(R\hat{G}, R\hat{G})$-bimodule. Nonetheless, the nature of these actions is given by the structure of $\hat{RB_C^ 1}(G)$, so they are best understood in the proof of the following lemma.
The $R$-module $R\mathcal{Y}_G\oplus R\hat{G}$ forms then an $R$-algebra. 

Now suppose that $G$ and $H$ are two groups such that there exists an isomorphism $\varphi:G\rightarrow H$. If $(t,\, \sigma)$ is a generator of $R\hat{G}$, then identifying $\varphi\sigma\varphi^{-1}$ with its class in $Out(H)$ we have that $(t\varphi^{-1}, \varphi\sigma\varphi^{-1})$ is in $R\hat{H}$. On the other hand, if $(\omega,\, \zeta)$ is a generator in $R\mathcal{Y}_G$, then $(\varphi\omega\varphi^{-1},\, \varphi|_{Z(G)}\zeta)$ is also in $R\mathcal{Y}_H$. 

\begin{nota}
Let $\mathcal{H}(G)$ be the group algebra $R\hat{G}$ if $p$ does not divide $|Z(G)|$ and $R\mathcal{Y}_G\oplus R\hat{G}$ in the other case. 

We will write $\mathcal{S}eed$ for the set of equivalence classes of couples $(G,\, V)$ where $G$ is a group and $V$ is a simple $\mathcal{H}(G)$-module. Two couples $(G,\, V)$ and $(H,\, W)$ are related if $G$ and $H$ are isomorphic, through an isomorphism $\varphi:G\rightarrow H$, and $V$ is isomorphic to $^{\varphi}W$ as $\mathcal{H}(G)$-modules. Here $^{\varphi}W$ denotes the $\mathcal{H}(G)$-module with action given through the elements defined in the previous paragraph.
\end{nota}

With these observations, Proposition 4.2 in \cite{mine} can be written as follows. 
\begin{prop}
\label{primos}
Let $\mathcal{S}$ be the set of isomorphism classes of simple $RB_C^1$-modules. Then the elements of $\mathcal{S}$ are in one-to-one correspondence with the elements of $\mathcal{S}eed$ in the following way: Given $S$ a simple $RB_C^1$-module we associate to its isomorphism class the equivalence class of $(G,\, V)$ where $G$ is a minimal group of $S$ and $V=S(G)$. Given the class of a couple $(G,\, V)$, we associate the isomorphism class of the functor $S_{G,\, V}$ defined in the previous section.
\end{prop}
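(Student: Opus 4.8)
The plan is to deduce the statement from the general correspondence of Proposition 4.2 in \cite{mine}, whose only hypothesis missing in the general setting is supplied here by Corollary \ref{prim}. Recall from Section 4 of \cite{mine} that for an arbitrary Green biset functor $A$ and a group $G$ with $\hat{A}(G)\neq 0$, every simple $\hat{A}(G)$-module $V$ produces a simple $A$-module $S_{G,\, V}=L_{G,\, V}/J_{G,\, V}$ having $G$ as a minimal group and satisfying $S_{G,\, V}(G)\cong V$; conversely, any simple $A$-module $S$ is isomorphic to $S_{G,\, S(G)}$ for any minimal group $G$, and $S(G)$ is a simple $\hat{A}(G)$-module. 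Specializing to $A=RB_C^1$, I would first invoke the structure description established above, namely $\hat{RB_C^1}(G)\cong\mathcal{H}(G)$, so that simple $\hat{RB_C^1}(G)$-modules are exactly simple $\mathcal{H}(G)$-modules, and the couples entering $\mathcal{S}eed$ are precisely the data $(G,\, V)$ feeding the construction $S_{G,\, V}$.

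Next I would set up the two assignments. To a simple module $S$ I attach $(G,\, S(G))$ for a chosen minimal group $G$; then $S\cong S_{G,\, S(G)}$ together with $S_{G,\, V}(G)\cong V$ shows that $S\mapsto (G,\, S(G))$ and $(G,\, V)\mapsto S_{G,\, V}$ are mutually inverse on representatives. The crux is well-definedness on equivalence classes. A simple $S$ may admit several minimal groups, but by Corollary \ref{prim} any two of them, say $G$ and $H$, are isomorphic through some $\varphi\colon G\to H$. I would then verify that $(G,\, S(G))$ and $(H,\, S(H))$ are related in $\mathcal{S}eed$, that is, $S(H)\cong{}^{\varphi}S(G)$ as $\mathcal{H}(G)$-modules. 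This is exactly where the transported action ${}^{\varphi}(-)$ from the Notation preceding the statement enters: the isomorphism biset associated with $\varphi$ intertwines the images of $\hat{RB_C^1}(G)$ and $\hat{RB_C^1}(H)$, and under the identifications with $\mathcal{H}(G)$ and $\mathcal{H}(H)$ this becomes precisely the correspondences $(t,\, \sigma)\mapsto (t\varphi^{-1},\, \varphi\sigma\varphi^{-1})$ and $(\omega,\, \zeta)\mapsto(\varphi\omega\varphi^{-1},\, \varphi|_{Z(G)}\zeta)$ recorded above.

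For the reverse direction I would check that equivalent couples yield isomorphic simple modules: if $(G,\, V)$ and $(H,\, W)$ are related via $\varphi$ with $V\cong{}^{\varphi}W$, then $S_{G,\, V}\cong S_{H,\, W}$, which again follows by applying the isomorphism biset of $\varphi$ and the naturality of the construction $S_{G,\, V}$ in \cite{mine}. Assembling these, both assignments descend to mutually inverse bijections between $\mathcal{S}$ and $\mathcal{S}eed$, as required.

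I expect the main obstacle to be the bookkeeping in the well-definedness step: one must track how the abstract action of $\hat{RB_C^1}(G)$ on $S(G)$ corresponds, under the concrete identification with $\mathcal{H}(G)$, to the $\mathcal{H}(G)$-action transported by $\varphi$, and confirm that the isomorphism biset genuinely realizes the maps on $R\hat{G}$ and $R\mathcal{Y}_G$ listed before the statement. Since Corollary \ref{prim} already guarantees that only isomorphic minimal groups occur, the remaining difficulty is purely this compatibility of the two actions rather than any new structural input.
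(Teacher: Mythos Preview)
Your proposal is correct and follows exactly the route the paper takes: the statement is obtained as a particular case of Proposition~4.2 in \cite{mine}, with Corollary~\ref{prim} supplying the missing hypothesis that minimal groups are isomorphic, and with the identification $\hat{RB_C^1}(G)\cong\mathcal{H}(G)$ translating couples $(G,V)$ into elements of $\mathcal{S}eed$. The only minor point is that in the paper this identification is formally proved in Lemma~\ref{prime}, which appears \emph{after} the proposition, so strictly speaking you are invoking the forthcoming lemma rather than something ``established above''; otherwise your expanded well-definedness discussion simply spells out what the paper leaves implicit in its one-line reference to \cite{mine}.
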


It only remains to see that the algebra $\hat{RB_C^1}(G)$ is isomorphic to $\mathcal{H}(G)$.


\begin{lema}
$ $
\begin{itemize}
\label{prime}
\item[i)] If $p$ does not divide $|Z(G)|$, then $\hat{RB_C^1}(G)$ is isomorphic to the group algebra $R\hat{G}$.
\item[ii)] If $p$ divides $|Z(G)|$, then $\hat{RB_C^1}(G)$ is isomorphic  to $R\mathcal{Y}_G\oplus R\hat{G}$ as $R$-algebras.
\end{itemize}
\end{lema}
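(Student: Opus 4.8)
The plan is to produce an explicit $R$-basis of $\hat{RB_C^1}(G)$, identify it with the natural basis of $\mathcal{H}(G)$, and then check that the composition $\circ$ reproduces the product of $\mathcal{H}(G)$ via Lemma~\ref{comp}. For the basis, recall that $RB_C^1(G\times G)$ is $R$-free on the $G\times G$-classes of $C$-subcharacters $(D,\, \delta)$, and $\hat{RB_C^1}(G)$ is its quotient by the span of those classes lying in $I_{RB_C^1}(G)$. By Lemma~\ref{boucs} and the conclusion drawn immediately after it, a transitive $C_\delta(G\times G)/D$ has nonzero class in the quotient only when $p_1(D)=p_2(D)=G$ and $k_1(D_\delta)=k_2(D_\delta)=1$; and, as recorded in the paragraphs preceding the statement, such a $D$ must be either of \emph{diagonal type} $\{(\sigma(g),\, g)\mid g\in G\}$ with $\sigma\in\mathrm{Aut}(G)$, or of \emph{central type} $\{(\omega(g)\zeta(c),\, g)\mid (g,\,c)\in G\times C\}$ with $\omega\in\mathrm{Aut}(G)$ and $\zeta\colon C\to Z(G)\cap\Phi(G)$ injective (the latter only when $p\mid|Z(G)|$). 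First I would verify that, up to $G\times G$-conjugation, the diagonal classes are parametrised by $(\delta,\,\bar\sigma)\in\mathrm{Hom}(G,\,C)\times\mathrm{Out}(G)$ and the central classes by $(\bar\omega,\,\zeta)\in\mathrm{Out}(G)\times Y_G$. Here one uses that $C$ is abelian, so any $\delta\colon G\to C$ factors through $G^{\mathrm{ab}}$ and is conjugation-invariant, while conjugating the subgroup only alters $\sigma$ (resp.\ $\omega$) by inner automorphisms. This already gives the required $R$-module isomorphism: $\hat{RB_C^1}(G)\cong R\hat{G}$ in case (i), and $\cong R\hat{G}\oplus R\mathcal{Y}_G$ in case (ii).

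Next I would compute $\circ$ on pairs of basis elements from Lemma~\ref{comp}. For two diagonal elements one has $p_2(V)=p_1(U)=G$, hence a single double coset; the $*$-product gives $V*U=\{(\sigma_1\sigma_2(g),\,g)\}$, again of diagonal type with automorphism part $\sigma_1\sigma_2$, and the fibre $\nu\mu^1$ computes (reading the subcharacter off the first coordinate) to $\delta_1\cdot(\delta_2\circ\sigma_1^{-1})$. This is exactly the multiplication $(\delta_1,\,\sigma_1)(\delta_2,\,\sigma_2)=(\delta_1\cdot{}^{\sigma_1}\delta_2,\,\sigma_1\sigma_2)$ in the semidirect product $\hat{G}=\mathrm{Hom}(G,\,C)\rtimes\mathrm{Out}(G)$, so the diagonal elements span a subalgebra isomorphic to $R\hat{G}$, settling case (i) entirely (since central types require $p\mid|Z(G)|$).

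In case (ii) I must additionally treat the three mixed products. The decisive one is central $\circ$ central. Here $k_2(V)=\omega_1^{-1}(\zeta_1(C))$ and $k_1(U)=\zeta_2(C)$ are central subgroups of order $p$, and the condition of Lemma~\ref{comp} defining $S$, namely $\nu(1,\,h')\mu(h',\,1)=1$ for all $h'\in k_2(V)\cap k_1(U)$, unwinds to the equality $\zeta_1=\omega_1\zeta_2$; when it holds the product is the central element $(\omega_1\omega_2,\,\omega_1\zeta_2)$ and when it fails the composition either is empty or acquires a nontrivial kernel and so dies in the quotient, which is precisely the product rule stated for $R\mathcal{Y}_G$. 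The diagonal $\circ$ central and central $\circ$ diagonal products are computed the same way and are checked to give the $(R\hat{G},\,R\hat{G})$-bimodule actions described in the Notation, so that $R\mathcal{Y}_G$ sits as the two-sided ideal complement of the subalgebra $R\hat{G}$, yielding the $R$-algebra $R\hat{G}\oplus R\mathcal{Y}_G$.

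The hard part will be the bookkeeping in Lemma~\ref{comp} for the central-type elements: tracking the subcharacter $\nu\mu^h$ through the Goursat $*$-products and, above all, deciding \emph{exactly} when a composite survives in $\hat{RB_C^1}(G)$ rather than factoring through a proper subquotient. The two subtle points are the passage from $\mathrm{Aut}(G)$ to $\mathrm{Out}(G)$, whose conjugation-invariance of all the attached data rests on $C$ being abelian and on $\zeta$ landing in the centre, and the verification that the vanishing condition $\zeta=\omega\chi$ emerges genuinely from the requirement that $C$ act freely (the set $S$ of Lemma~\ref{comp}) rather than from a coarser orbit count.
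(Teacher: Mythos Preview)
Your overall strategy---enumerate the surviving transitive $C$-fibred $(G\times G)$-sets, match them to a basis of $\mathcal{H}(G)$, and compute $\circ$ via Lemma~\ref{comp}---is precisely the paper's. But there is a real gap. You invoke ``the paragraphs preceding the statement'' for the dichotomy diagonal/central and for the Frattini constraint $\mathrm{Im}\,\zeta\subseteq Z(G)\cap\Phi(G)$. Those paragraphs begin with ``We will see that\ldots'': they are a forward announcement of what the lemma proves, not an established fact you may cite. In particular, showing that a central-type element $Y_{\omega,\,\zeta}$ has nonzero class in $\hat{RB_C^1}(G)$ \emph{if and only if} $\mathrm{Im}\,\zeta\subseteq\Phi(G)$ is a genuine two-step argument carried out inside the paper's proof: first, using Lemma~2.3.22 of \cite{biset}, one shows $D=\{(\omega(g)\zeta(c),\,g)\}$ factors as $M*N$ through some $|K|<|G|$ with $\delta$ splitting as $\mu\nu$ exactly when there exists $\mu:G\to C$ with $\mu|_{Z(G)}\circ\zeta\neq 1$; second, one proves that no such $\mu$ exists if and only if $\mathrm{Im}\,\zeta\subseteq\Phi(G)$. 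Your plan contains neither step, so in case~(ii) you have not actually produced a basis of $\hat{RB_C^1}(G)$---you only know the survivors lie among the diagonal and central types, not \emph{which} central $\zeta$'s survive. (Likewise, you should verify that every diagonal $X_{t,\,\sigma}$ survives; this is easier, but still requires checking that $\Delta_\sigma(G)$ admits no $*$-factorisation through a smaller group.)

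A smaller bookkeeping point: your diagonal multiplication $(\delta_1,\,\sigma_1)(\delta_2,\,\sigma_2)=(\delta_1\cdot{}^{\sigma_1}\delta_2,\,\sigma_1\sigma_2)$ does not agree with what Lemma~\ref{comp} actually yields. The paper gets $X_{t_1,\,\sigma_1}\circ X_{t_2,\,\sigma_2}=X_{(t_1\circ\sigma_2)t_2,\,\sigma_1\sigma_2}$, so it is $t_1$ that is twisted by $\sigma_2$ (via precomposition), not $t_2$ by $\sigma_1$; this is the specific $\mathrm{Out}(G)$-action used to define $\hat{G}$, and you should re-derive it rather than guess the convention.
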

\begin{proof}
Let $C_{\delta}(G\times G)/D$ be a transitive $C$-fibred $(G\times G)$-set the class of which is different from $0$ in $\hat{RB_C^1}(G)$. From Lemma \ref{boucs} we have that $D_{\delta}$ must satisfy $p_1(D_{\delta})=p_2(D_{\delta})=G$ and $k_1(D_{\delta})=k_2(D_{\delta})=1$. Also, since $\delta$ is a function, we have that $k_3(D_{\delta})=1$. Goursat's Lemma then implies that $D_{\delta}$ is isomorphic to $p_{2,\, 3}(D_{\delta})$, also isomorphic to $p_{1,\, 3}(D_{\delta})$. Since $C$ has prime order, we have two choices for $p_{2,\, 3}(D_{\delta})$, either it is of the form $G\times C$ or of the form $\{(g,\, t(g))\mid g\in G,\, t:G\rightarrow C\}$, for some group homomorphism $t$.

By Goursat's Lemma, if $p_{2,\, 3}(D_{\delta})$ is equal to $G\times C$, then
\begin{displaymath}
D_{\delta}=\{(\alpha(g,\, c),\, g,\, c)\mid (g,\, c)\in G\times C,\,  \alpha:G\times C\twoheadrightarrow G\}
\end{displaymath}
with $\alpha$ an epimorphism of groups. Since $k_2(D_{\delta})=k_3(D_{\delta})=1$, we have that $\alpha(g,\, c)=\omega(g)\zeta(c)$ with $\omega$ an automorphism of $G$ and $\zeta$ and injective morphism from $C$ to $Z(G)$. In particular, if $p$ does not divide the order of $Z(G)$, then this case cannot occur. 

Suppose that $p_{2,\,3}(D_{\delta})=\{(g,\, t(g))\mid g\in G,\, t:G\rightarrow C\}$, for a group homomorphism $t$. Goursat's Lemma implies that there exists $\sigma$ an automorphism of $G$ such that $D_{\delta}=\{(\sigma(g),\, g,\, t(g))\mid g\in G\}$. Hence $D=\Delta_{\sigma}(G)$ and $\delta(g_1,\, g_2)=t(g_2^{-1})$. We will then replace $\delta$ by $t$ and write $X_{t,\, \sigma}$ for $C_{\delta}(G\times G)/D$ in this case. The isomorphism classes of these elements in $\hat{RB_C^1}(G)$ form an $R$-basis for it, since Lemma 2.3.22 in \cite{biset} and Goursat's Lemma imply that $\Delta_{\sigma}(G)$ cannot be written as $M*N$ for any $M\leqslant G\times K$ and $N\leqslant K\times G$ with $K$ of order smaller than $|G|$. Let us see that we have a bijective correspondence between the basic elements $[X_{t,\, \sigma}]$ of $\hat{RB_C^1}(G)$ and $Hom(G,\, C)\rtimes Out(G)$. Any representative of the isomorphism class of $X_{t,\, \sigma}$ is of the form $X_{tc_2^{-1},\ c_1\sigma c_2^{-1}}$ where $c_1$ denotes the conjugation by some $g_1\in G$ and $c_2^{-1}$ denotes the conjugation by some $g_2^{-1}\in G$. Since $C$ is abelian, $tc_2^{-1}$ is equal to $t$, and the class of $\sigma$ in $Out(G)$ is the same as the class of $c_1\sigma c_2^{-1}$. On the other hand, if we take $\sigma c_g$ any representative of the class of an automorphism $\sigma$ in $Out(G)$, then $X_{t,\, \sigma}\cong X_{t,\, \sigma c_g}$.

It remains to see that this bijection is a morphism of rings. Using Lemma \ref{comp} it is easy to see that 
\begin{displaymath}
X_{t_1,\, \sigma_1}\circ X_{t_2,\, \sigma_2} = X_{(t_1\circ\sigma_2)t_2,\, \sigma_1\sigma_2}
\end{displaymath}
and the product in $\hat{G}$ is precisely $(t_1,\, \sigma_1)(t_2,\, \sigma_2)=((t_1\circ \sigma_2)t_2,\, \sigma_1\sigma_2)$.

This proves point $i)$. From now on, we suppose that $p$ divides $|Z(G)|$.

As we said before, if $p$ divides $|Z(G)|$, then we can consider the case of $C$-fibred $(G\times G)$-sets $C_{\delta}(G\times G)/D$ such that $p_{2,\, 3}(D_{\delta})=G\times C$.
In this case, $D_{\delta}$ equals
\begin{displaymath}
\{(\omega(g)\zeta(c),\, g,\, c)\mid (g,\, c)\in G\times C\}
\end{displaymath}
where $\omega$ is an automorphism of $G$ and $\zeta$ is an injective morphism from $C$ to $Z(G)$. We will prove that the class of $C_{\delta}(G\times G)/D$ in $\hat{RB_C^1}(G)$ is different from $0$ if and only if $Im \zeta\subseteq Z(G)\cap \Phi (G)$, and we will write $Y_{\omega,\, \zeta}$ for $C_{\delta}(G\times G)/D$ in this case. The claim will be proved in two steps, first let us prove that the class of $Y_{\omega,\, \zeta}$ in $\hat{RB_C^1}(G)$ is different from $0$ if and only if 
$\mu|_{Z(G)}\circ \zeta= 1$ for every group homomorphism $\mu :G\rightarrow C$. 
 Using Lemma 2.3.22 of \cite{biset} it is easy to see that $D=\{(\omega(g)\zeta(c),\, g)\mid (g,\, c)\in G\times C\}$ is equal to $M*N$ for some $M\leqslant G\times K$ and $N\leqslant K\times G$ with $K$ a group of order smaller than $|G|$ if and only if $K$ has order $|G|/p$ and $M$ and $N$ are isomorphic to $G$. Suppose now that there exist $\mu :G\rightarrow C$ and $\nu :G\rightarrow C$ such that $\delta (g_1,\, g_2)=\mu(g_1)\nu(g_2)$, then in particular for every $c\in C$, $\delta(\zeta(c), 1)=c^{-1}=\mu\zeta(c)$. Conversely, if there exists $\mu: G\rightarrow C$ such that $\mu|_{Z(G)}\circ \zeta\neq 1$, then we can find $\mu': G\rightarrow C$ such that $\mu'\zeta (c)=c^{-1}$ for all $c\neq 1$, and define $\nu : G\rightarrow C$ as $\nu(g)=\mu' \omega (g^{-1})$. So we have $\mu'(\omega(g)\zeta(c))\nu(g)=c^{-1}$ which is equal to $\delta(\omega(g)\zeta(c),\, g)$. 
 
 Now we prove that for $\zeta:C\hookrightarrow Z(G)$, we have $Im\zeta\subseteq \Phi (G)$ if and only if $\mu|_{Z(G)}\circ\zeta= 1$ for every group homomorphism $\mu :G\rightarrow C$ (thanks to the referee for this observation). Suppose $Im\zeta\subseteq \Phi (G)$ and let $\mu:G\rightarrow C$ be a morphism of groups. If there exists $c\in C$ such that $\mu\zeta(c)\neq 1$ then $Ker \mu$ is a normal subgroup of $G$ of index $p$ and so it is maximal. But clearly $\zeta(c)\notin Ker \mu$, which is a contradiction. Now suppose that for all $\mu:G\rightarrow C$ we have $\mu\circ \zeta|_{Z(G)}\neq 1$. Let $M$ be a maximal subgroup of $G$ and $c$ be a non-trivial element of $Im\zeta=C'$. If $c\notin M$, then $C'\cap M=1$, and since $C'\leqslant Z(G)$, we have that $C'M$ is a subgroup of $G$. Since $M$ is maximal, $G=C'M$. But this means that there exists $\mu :G\rightarrow C$ such that $\mu (c)\neq 1$, a contradiction.

In a similar way as it is done in point $i)$, we have a bijective correspondence between the isomorphism classes of elements $Y_{\omega,\, \zeta}$ in $\hat{RB_C^1}(G)$ and $R\mathcal{Y}_G$.  
This establishes an isomorphism of $R$-modules between $\hat{RB_C^1}(G)$ and $R\mathcal{Y}_G\oplus R\hat{G}$. Now we describe the algebra structure. The following calculations are made using Lemma \ref{comp}, Lemma \ref{boucs} and Lemma 2.3.22 in \cite{biset}. 

The composition of elements $Y_{\omega,\, \zeta}$ is given by
\begin{equation*}
Y_{\omega,\, \zeta}\circ Y_{\alpha,\, \chi}=\left\{ 
\begin{array}{cl}
Y_{\omega\alpha,\, \omega\chi} & \textrm{ if $\zeta=\omega\chi$}\\
0 & \textrm{ otherwise .}
\end{array}\right. 
\end{equation*}
The product $X_{t,\, \sigma}\circ Y_{\omega,\, \zeta}$ is different from $0$ if and only if $t\zeta(c)c\neq 1$ for all $c\neq 1$. Then, if we let $Id_C$ be the identity morphism of $C$, we have that $(t\zeta)Id_C$ defines an automorphism on $C$, which we will call $r$. Given $g\in G$ there exists only one $c_{g}\in C$ such that $t\omega(g)=r(c_g)$ and sending $g$ to $\omega(g)\zeta(c_g)$ defines an automorphism on $G$, which we will call $s$.   We have
\begin{equation*}
X_{t,\, \sigma}\circ Y_{\omega,\, \zeta}=\left\{ 
\begin{array}{cl}
Y_{\sigma s,\, \sigma\zeta r^{-1}} & \textrm{ if $r=(t\zeta)Id_C$ is an automorphism}\\
0 & \textrm{ otherwise}
\end{array}\right. .
\end{equation*}
Using this formula on the indices defines a left action of $R\hat{G}$ on $R\mathcal{Y}_G$. On the other hand, $Y_{\omega,\, \zeta}\circ X_{t,\, \sigma}$ is different from $0$ if and only if $\omega\sigma(g)\neq \zeta t(g)$ for all $g\in G,\, g\neq 1$. Then sending $g\in G$ to $\omega\sigma(g)\zeta t(g)$ defines an automorphism in $G$ and we have
\begin{equation*}
Y_{\omega,\, \zeta}\circ X_{t,\, \sigma}=\left\{ 
\begin{array}{cl}
Y_{(\omega\sigma)\zeta t,\, \zeta} & \textrm{ if $(\omega\sigma)\zeta t$ is an automorphism}\\
0 & \textrm{ otherwise}
\end{array}\right. 
\end{equation*} 
With this we have the right action of $R\hat{G}$ on $R\mathcal{Y}_G$. It can be proved directly that with these actions $R\mathcal{Y}_G\oplus R\hat{G}$ is an $R$-algebra, and it is clearly isomorphic to $\hat{RB_C^1}(G)$.

\end{proof}



E-mail: \texttt{nadiaro$\, $@$\, $ciencias.unam.mx} 

\bibliographystyle{plain}
\bibliography{burns}

\end{document}